\documentclass[12pt,reqno]{amsart}


\headheight=6.15pt
\textheight=8.75in
\textwidth=6.5in
\oddsidemargin=0in
\evensidemargin=0in
\topmargin=0in

\usepackage{epsfig}
\usepackage{color}
\usepackage{amsmath, amsthm, amsfonts, amssymb, mathrsfs}
\usepackage{graphicx}
\usepackage{enumerate}

\numberwithin{equation}{section}

\newcommand{\wh}{\widehat}

\newcommand{\C}{{\mathbb C}}

\newtheorem{theo}{{\sc \bf Theorem}}[section]
\newtheorem{cor}[theo]{{\sc \bf Corollary}}
\newtheorem{lem}[theo]{{\sc \bf Lemma}}
\newtheorem{prop}[theo]{{\sc \bf Proposition}}

\theoremstyle{definition}
\newtheorem{defin}{Definition}[section]

\begin{document}

\title{A $P$-Adic Spectral Triple}

\author{Slawomir Klimek}
\address{Department of Mathematical Sciences,
Indiana University-Purdue University Indianapolis,
402 N. Blackford St., Indianapolis, IN 46202, U.S.A.}
\email{sklimek@math.iupui.edu}

\author{Matt McBride}
\address{Department of Mathematics,
University Of Oklahoma,
601 Elm Ave., Norman, OK 73019,  U.S.A.}
\email{mmcbride@math.ou.edu }

\author{Sumedha Rathnayake}
\address{Department of Mathematical Sciences,
Indiana University-Purdue University Indianapolis,
402 N. Blackford St., Indianapolis, IN 46202, U.S.A.}
\email{srathnay@iupui.edu}

\date{\today}

\begin{abstract}
We construct a spectral triple for the C$^*$-algebra of continuous functions on the space of $p$-adic integers by using a rooted tree obtained from coarse-grained approximation of the space, and the forward derivative on the tree. Additionally, we verify that our spectral triple satisfies the properties of a compact spectral metric space, and we show that the metric on the space of $p$-adic integers induced by the spectral triple is equivalent to the usual $p$-adic metric.
\end{abstract}

\maketitle
\section{Introduction}

This paper studies the space of $p$-adic integers from the point of view of noncommutative geometry.
To motivate our interest in this problem we start with a short overview of the concept of a noncommutative (quantum) space.
Simply put, unital C$^*$-algebras will be called compact quantum spaces. This is due to the Gelfand-Naimark Theorem, which states that commutative unital C$^*$-algebras are precisely the algebras of continuous functions on compact topological spaces. In this paper we will only consider unital algebras.
\smallskip

In the spirit of noncommutative geometry we look at extra structures on  C$^*$-algebras corresponding to geometrical notions on topological spaces. In a series of papers \cite{R1}, \cite{R2}, \cite{R3}, and inspired by \cite{C2}, M. Rieffel proposed the concept of a compact quantum metric space based on the following observations.
If $(X, \rho)$ is a compact metric space  and $\phi \in C(X)$, the space of continuous functions on $X$, let $L(\phi)$ be the Lipschitz constant of $\phi$ defined by:
\begin{equation*}
L(\phi) = \sup_{x\ne y} \frac{|\phi (x) - \phi (y)|}{\rho(x, y)} \in [0, \infty].
\end{equation*}
Then $L$ is a seminorm on $C(X)$, and the data $(C(X) , L)$ is sufficient to recover $(X, \rho)$, via Gelfand-Naimark Theorem and the formula:
\begin{equation*}
\rho(x,y)=\sup_{\phi}\{  |\phi(x)-\phi(y)|,\  L(\phi )\leq1\}.
\end{equation*}
So the notion of a metric can be reformulated in terms of the C$^*$-algebra $C(X)$ through a seminorm satisfying some additional properties, see \cite{R3} for the precise definition of a compact quantum metric space.
\smallskip

There are many Lipschitz like seminorms on C$^*$-algebras, but we will be particularly interested in those coming from spectral triples \cite{C}, a concept which arose from filtering central properties of Dirac type operators on manifolds. There are several versions of the definition of the spectral triple in the literature, appropriate for different contexts. We will use the following:

\begin{defin} A spectral triple for a C$^*$-algebra $A$ is a triple $(\mathcal A, \mathcal H,\mathcal D)$ where $\mathcal H$ is a Hilbert space on which $A$ is represented by bounded operators (i.e. there exists  a $\ast$-homomorphism $\Pi: A \to \mathcal B( \mathcal H)$),  $\mathcal A$ is a dense $\ast-$subalgebra of $A$, and $\mathcal D$ is a self-adjoint operator (typically unbounded) on $\mathcal H$ satisfying: 
\begin{enumerate}
\item for every $a\in \mathcal A$ the commutator $[\mathcal D,\Pi(a)]$ is bounded,
\item $(1+ \mathcal D^2)^{-1/2}$ is a compact operator.
\end{enumerate}
\end{defin}

Additionally, a spectral triple is called even if there is a $\mathbb Z/2\mathbb Z$ grading on $\mathcal H$ with respect to which the representation $\Pi$ is even, while the operator $\mathcal D$ is odd with respect to the grading. This will be the case in our paper. Spectral triples with faithful representation of $A$, called unbounded Fredholm modules, have been used initially by A. Connes for the cyclic cohomology of the noncommutative space defined by $A$ \cite{C}. He also showed in a particular example of the Dirac operator on a compact Riemannian manifold \cite{C2} that spectral triples contain information on the metric structure of the space via Connes metric formula described below.

Returning to the notion of compact quantum metric spaces, we follow a more recent paper \cite{BMR}, where the authors propose the following terminology. 

\begin{defin}\label{CQMS} Starting with a spectral triple $(\mathcal A, \mathcal H, \mathcal D)$ for a C$^*$-algebra $A$ we define $L_{\mathcal D}(a):= ||\;[\mathcal D, \pi(a)]\;||$. If the conditions $(1)-(3)$ below are satisfied, then the pair $(A, L_{\mathcal D})$ is called a compact spectral metric space:
\begin{enumerate}
\item The representation of $A$ in $\mathcal H$ is non-degenerate, i.e. $A\mathcal H=\mathcal H$.
\item The commutant $A_{\mathcal D}' = \{a \in A:\ [\mathcal D,\pi(a)] = 0\}$ is trivial, i.e. $A_{\mathcal D}' = \C I$.
\item The image of the Lipschitz ball $B_{\mathcal D} = \{a \in A :\  L_{\mathcal D}(a) \leq 1\}$ is precompact in $A/A_{\mathcal D}'$.
\end{enumerate}
\end{defin}

It follows from the work of M. Rieffel that $(A, L_{\mathcal D})$ is a spectral metric space if and only if the Connes metric, defined on the state space of $A$ via the formula 
\begin{equation*}
d_C(\phi, \psi) = \sup_a\{|\phi(a) - \psi(a)| :\  L_{\mathcal D}(a) \leq 1\}
\end{equation*}
 is well defined and the topology induced by $d_C$ is equivalent to the weak* -topology.  In the commutative case this extension of a metric from the compact space $X$ to its set of probability measures, in which case is the the space of states for $C(X)$, had been defined and studied before by Kantorovich and Rubinstein \cite{KR}.
\smallskip

It is important to keep in mind that there is more geometrical information encoded in a spectral triple. For example, for a compact spin Riemannian manifold one can retrieve its smooth structure, its Riemannian metric and other properties directly from its standard Dirac operator \cite{C1}. Consequently any closed Riemannian manifold endowed with a spin structure can be recovered from the spectral triple of its algebra of continuous functions and the Dirac operator acting on $L^2$ spinors, via the reconstruction theorem of Connes \cite{C1} and Lord et al.\cite{LRV}.
\smallskip

The goal of this paper is to present a graph theoretic construction of a spectral triple for the C$^*$-algebra of continuous functions on the space of $p$-adic integers. Several constructions of similar spectral triples on Cantor sets can be found in the literature, including the original proposal in Connes book \cite{C} (see also \cite{CI}). Our starting point is the same as in \cite{P} and \cite{BP} in the sense that we are using the rooted tree whose vertices are balls, which can be considered as a coarse grained approximation of the space of $p$-adic integers. We then use the ``forward" derivative on this tree to construct the spectral triple, leading to a more elaborate Dirac-type operator. In fact in \cite{P} and \cite{BP} the operator is essentially diagonal in the standard basis of $l^2$ functions on the vertices of the graph. The authors then use a clever construction of a representation of the Lipschitz functions in the Hilbert space to reconstruct the metric on the Cantor set. In our construction we simply consider a more straightforward, diagonal representation of the algebra of functions. However, since our techniques use Fourier transform, they are currently not available for general Cantor sets discussed in \cite{P} and \cite{BP}.
\smallskip

Additionally, introducing and studying noncommutative notions like spectral triples perhaps allows to view the Cantor set of $p$-adic integers as more than a mere metric space, namely as some sort of a differentiable space. Moreover there is a number theoretic angle to considering operators related to $p$-adic numbers, as they may lead to interesting spectral functions. 
\smallskip

The paper is organized as follows. The results are presented in four sections. First we describe the ``$p$-adic tree" associated to the space of $p$-adic integers via Michon's correspondence \cite{BP}. We also review some basics of Fourier analysis in the space of $p$-adic integers. In the following section we analyze the operator $D$ of ``forward" derivative on the $p$-adic tree which will be the base of our construction of the spectral triple. We investigate its distributional properties and its inverse in the Hilbert space of the weighted $l^2$ functions on the vertices of the $p$-adic tree. The key tool we are using here is the $p$-adic Fourier transform which reduces $D$ to Jacobi type operators similar to those studied in \cite{KM}. The spectrum of $D^*D$ is studied in \cite{KRS}.
In section \ref{sec4}, we use the operator $D$ to construct a spectral triple for the algebra of continuous functions on the set of $p$-adic integers, and verify the relevant properties. In the last section we study the metric on $p$-adic integers, induced by our spectral triple and show in particular that it is equivalent to the usual $p$-adic metric. We also verify that our spectral triple gives a compact spectral metric space structure on $C(\mathbb Z_p)$.

\section{The $p$-adic tree}

\subsection{} The main object we study in this paper is the Cantor set given by the ring of $p$-adic integers, $\mathbb Z_p$, equipped with the $p$-adic metric defined by $\rho_p(x,y)=|x-y|_p$. Recall that $\mathbb Z_p$ is the subset of the set of all $p$-adic numbers $\mathbb Q_p$ whose $p$-adic norm is less than or equal to one. Equivalently, $\mathbb Z_p$ is the completion of the set of integers, $\mathbb Z$, with respect to the $p$-adic metric $\rho_p$. Because $\mathbb Z_p$ is totally disconnected the range of $\rho_p$ is countable and, other than zero, consists of numbers of the form $p^{-n}$ where $n$ is a nonnegative integer. 
\smallskip

It is useful to recall the $p$-adic representation of an $x\in \mathbb Q_p$ as a convergent series 
\begin{equation*}
x=\sum_{n=\alpha}^\infty x_n p^n, \;\; \alpha \in \mathbb Z
\end{equation*}
with unique $x_n\in\{0,1,2,\ldots,p-1\}$ and  $x_\alpha\ne 0$. The $p$-adic norm of $x$ is then defined as $|x|_p=p^{-\alpha}$. Notice that for any $x\in \mathbb Z_p$ we will always have $\alpha\geq 0$. The fractional part of a $p$-adic number $x\in \mathbb Q_p$, denoted $\{x\}$, is defined to be 
\begin{equation*}
\{x\}:=\begin{cases}
\sum\limits_{n=\alpha}^{-1} x_n p^n & \textrm { if } \alpha\leq -1\\
0 & \textrm { otherwise}
\end{cases}
\end{equation*}
\bigskip

\subsection{}

We will now briefly review some basics of harmonic analysis on $\mathbb Z_p$. A more elaborate treatment of this subject can be found in \cite{HR} and \cite{VVZ}.
\smallskip

 A map $\chi_a: \mathbb Q_p \rightarrow \mathbb C$ defined by 
\begin{equation*}
\chi_a(x)=e^{2\pi i \{ax\}}
\end{equation*}
is a character on $\mathbb Q_p$. In fact, it can be shown that every character is of the form  $\chi_a(x)=e^{2\pi i \{ax\}}$ for a unique $a \in \mathbb Q_p$. Two such characters $\chi_a(x)$ and $\chi_{a'}(x)$ coincide on $\mathbb Z_p$ if and only if $(a'-a)\in \mathbb Z_p$. Consequently we see that the dual groups $\wh {\mathbb Q_p}$, $\widehat{\mathbb Z_p}$ of $\mathbb Q_p, \mathbb Z_p$ are,
\begin{equation*}
\wh{\mathbb Q}_p=\mathbb Q_p, \;\; \wh{\mathbb Z}_p=\mathbb Q_p/\mathbb Z_p.
\end{equation*}

The discrete group $\wh{\mathbb Z_p}$, called the Pr\"ufer group has many presentations. It can be given the structure of an inductive limit of groups $\mathbb Z/p^n\mathbb Z$. On the other hand, it can also be identified with a group of roots of unity 
\begin{equation*}
\{z\in\mathbb Z: \exists n \in \{0,1,2,\ldots\} \textrm{ with } z^{p^n}=1\}. 
\end{equation*}
Every such root of unity can be written uniquely as
$z=e^{\frac{2\pi ik}{p^n}}$ with $p\nmid  k$. So, the corresponding character on $\mathbb Z_p$ is
\begin{equation}\label{nkchar}
\chi_{n,k}(x)=e^{2\pi i\{\frac{kx}{p^n}\}}.
\end{equation}
Consequently,  $\wh{\mathbb Z}_p\cong \{e^{2\pi i\{\frac{kx}{p^n}\}}: n \in \{0,1,2,\ldots\}, p\nmid k \in \mathbb Z, x\in \mathbb Z_p\}$.
\smallskip

A function $\phi: \mathbb Z_p \rightarrow \mathbb C$ is called locally constant if for every $x\in \mathbb Z_p$ there exists a neighborhood $U_x$ of $x$ such that $\phi$ is constant on $U_x$. Any such function is constant on every ball of sufficiently small radius. We let $n(\phi)$ be the smallest positive integer $n$ such that $\phi$ is constant on every ball of radius $p^{-n}$. The space of locally constant functions on $\mathbb Z_p$ is called the space of test functions and will be denoted by $\mathcal E(\mathbb Z_p)$. The convergence in $\mathcal E(\mathbb Z_p)$ is defined in the following way.
\smallskip

If $\phi_k\in\mathcal E(\mathbb Z_p)$ is a sequence of test functions then $\phi_k\to 0$ as $k\to\infty$ if,
\begin{enumerate}
\item the functions $\phi_k$ are uniformly constant, i.e. the sequence of numbers  $\{n(\phi_k)\}$ is bounded,
\item $\phi_k(x) \xrightarrow\ 0$ as $k\to\infty$ for every ${x\in \mathbb Z_p}$.
\end{enumerate}

The space of distributions of $\mathbb Z_p$, denoted $\mathcal E^*(\mathbb Z_p)$, is the space of linear functionals on $\mathcal E(\mathbb Z_p)$ equipped with the weak * -topology. It follows from the definition of the topology on $\mathcal E(\mathbb Z_p)$ that every linear functional $T\in \mathcal E^*(\mathbb Z_p)$ is automatically continuous.
A proof of this result can be found in \cite{VVZ}.

Since $(\mathbb Z_p,+)$ is a compact abelian group there exists a Haar measure on $\mathbb Z_p$ which will be denoted by $d_px$. This Haar measure satisfies $d_p(xa)=|a|_p d_px$ and is normalized so that $\int_{\mathbb Z_p}\,d_px=1$.
\smallskip

\subsection{}

The Fourier transform of a test function $\phi$ on $\mathbb Z_p$ is the function $\wh\phi$ on the classes $\mathbb Q_p/\mathbb Z_p$ given by
\begin{equation*}
\wh\phi([a])=\int_{\mathbb Z_p}\phi(x)\overline{\chi_a(x)}\,d_px.
\end{equation*}
It can be easily verified that $\int_B{\chi_a(x)}\,d_px=0$ for any sufficiently small ball $B$ with radius $p^{-n}$ such that $p^n\leq |a|_p$. Consequently, only finite number of Fourier coefficients of a locally constant function are nonzero. The Fourier transform gives an isomorphism between $\mathcal E(\mathbb Z_p)$ and $\mathcal E(\wh{\mathbb Z}_p)$, the space of compactly supported functions on $\wh{\mathbb Z}_p$, which in our case means the space of functions which are zero almost everywhere. The inverse Fourier transform is given by:
\begin{equation*}
\phi(x)=\sum_{[a]\in \wh{\mathbb Z}_p}\wh\phi([a])\chi_a(x).
\end{equation*}

If $T\in \mathcal E^*(\mathbb Z_p)$ is a distribution on $\mathbb Z_p$ then its Fourier transform is the function $\wh T$ on $\wh{\mathbb Z_p}$ given by
\begin{equation*}
\wh T([a])=T\left(\overline{\chi_a(x)}\right).
\end{equation*}
The Fourier transform is an isomorphism between $\mathcal E^*(\mathbb Z_p)$ and $\mathcal E^*(\wh{\mathbb Z}_p)$, the space of all functions on $\wh{\mathbb Z}_p$. The inverse Fourier transform of a distribution is given by:
\begin{equation*}
T=\sum_{[a]\in \wh{\mathbb Z}_p}\wh T([a])\chi_a(x).
\end{equation*}
The formal sum above makes distributional sense because test functions on $\wh{\mathbb Z}_p$ are non zero only at a finite number of points.

As usual, the distributional Fourier transform $T\mapsto \wh T$ gives a Hilbert space isomorphism
\begin{equation*}
L^2({\mathbb Z}_p, d_px) \cong  l^2(\wh{\mathbb Z}_p).
\end{equation*}
\smallskip

\subsection{}

Via Michon's correspondence \cite{M} we can associate a weighted, rooted tree $\{V,E\}$ to the Cantor metric space $(\mathbb Z_p,\rho_p)$. Here $V$ and  $E$ are the sets of vertices and edges of the tree respectively. The tree is constructed in the following way. The vertices of the tree are the balls in $(\mathbb Z_p,\rho_p)$. The set of vertices can be written  as $V=\bigcup_{n=0}^\infty V_n$ where $V_n=\{\textrm{balls of diameter }p^{-n}\}$. 

The set of edges $E$ also has a decomposition $E=\bigcup_{n=1}^\infty E_n$ where if $e\in E_n$ then $e=(v,v')$ where $v\in V_n$ and $v'\in V_{n+1}$, and  $v'\subset v$. Then we say that there is an edge (undirected) between vertices $v$ and $v'$. Since $\mathbb Z_p$ is a compact set, for each $n$ the number of balls with diameter $p^{-n}$ and the degree of each vertex $v\in  V$ are finite.

The root of the weighted rooted tree corresponding to $(\mathbb Z_p,\rho_p)$ is $\mathbb Z_p$, the unique ball of radius one.  To conveniently parametrize the vertices we need the following observation.
\smallskip

\begin{prop}
Every ball of radius $p^{-n}$ contains exactly one integer $k$ such that $0\leq k<p^n$. 
\end{prop}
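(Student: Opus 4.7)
\medskip\noindent\textbf{Proof proposal.}
The plan is to identify a ball of diameter $p^{-n}$ in $(\mathbb Z_p,\rho_p)$ with a coset of $p^n\mathbb Z_p$, then exhibit the integer claimed by the proposition as a truncation of the $p$-adic expansion.

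First I would observe that by the ultrametric property of $\rho_p$, any two points in a ball of diameter $p^{-n}$ differ by an element whose $p$-adic norm is at most $p^{-n}$, i.e.\ an element of $p^n\mathbb Z_p$. Conversely, a coset of $p^n\mathbb Z_p$ inside $\mathbb Z_p$ has diameter $p^{-n}$ (or smaller, but in fact exactly $p^{-n}$ since $p^n\mathbb Z_p$ contains $p^n$, whose norm is $p^{-n}$). Hence the balls of diameter $p^{-n}$ are precisely the cosets $x+p^n\mathbb Z_p$ for $x\in\mathbb Z_p$, and the proposition reduces to showing that every such coset contains a unique integer in $\{0,1,\ldots,p^n-1\}$.

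For existence, I would use the $p$-adic expansion $x=\sum_{m=0}^\infty x_m p^m$ with $x_m\in\{0,1,\ldots,p-1\}$ recalled earlier in the paper. Setting
\begin{equation*}
k=\sum_{m=0}^{n-1} x_m p^m,
\end{equation*}
one has $0\le k<p^n$ (it is a base-$p$ number with $n$ digits), and $x-k=\sum_{m\ge n}x_m p^m\in p^n\mathbb Z_p$, so $|x-k|_p\le p^{-n}$ and $k$ lies in the same ball as $x$. For uniqueness, suppose $k,k'\in\{0,1,\ldots,p^n-1\}$ both lie in the same ball. Then $|k-k'|_p\le p^{-n}$, so $p^n\mid(k-k')$ as ordinary integers; but $|k-k'|<p^n$ in the Archimedean sense, which forces $k=k'$.

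I do not anticipate any real obstacle: the entire content is the identification of $\mathbb Z_p/p^n\mathbb Z_p$ with $\mathbb Z/p^n\mathbb Z$, and the only subtlety is making sure that ``ball of diameter $p^{-n}$'' is interpreted as a coset of $p^n\mathbb Z_p$, which follows immediately from the fact that the range of $\rho_p$ is $\{0\}\cup\{p^{-m}:m\ge 0\}$.
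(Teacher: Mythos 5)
Your proof is correct and follows essentially the same route as the paper: existence by truncating the $p$-adic expansion $x=\sum_m x_m p^m$ to $k=\sum_{m<n}x_m p^m$, and uniqueness by noting that two integers in $\{0,\ldots,p^n-1\}$ lying in the same ball would have their difference divisible by $p^n$, which the Archimedean bound forbids. The only addition is your explicit identification of the balls with cosets of $p^n\mathbb Z_p$, which the paper leaves implicit.
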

\begin{proof}
To prove the uniqueness we notice that for two nonnegative integers $k_1, k_2$ to be in the same ball of radius $p^{-n}$ their difference $k_1-k_2$ would have to be divisible by $p^{n}$ and the inequality $0 \leq k_1, k_2 < p^n$ prevents that from happening. To construct such an integer in any ball simply pick one element $x=\sum_{i=0}^\infty x_i p^i$ in it, and chop off all the terms with powers of $p$ greater than or equal to $n$. The resulting integer
$\sum_{i=0}^{n-1} x_i p^i$ is in the same ball as $x$ and is less then $p^{n}$. 
\end{proof}

Thus there is a one-to-one correspondence between the set of vertices $V_n$ and the set of nonnegative integers less than $p^n$, i.e. we write
\begin{equation}\label{vlabel}
V=\{(n,k)|\ n=0,1,2,\ldots,\ 0\leq k<p^n\}.
\end{equation}

The ball $(n+1,k')$ is contained in the ball $(n,k)$ ( i.e., there is an edge between vertex $(n,k)$ and $(n+1,k')$) if and only if $p^n$ divides the difference $k'-k$. Thus we see that, from each vertex $(n,k)$ comes out exactly $p$ edges connecting $(n,k)$ to $(n+1,k+ip^n)$ for $i=0,1,\ldots, p-1$.
\smallskip

If $e$ is an edge $e=(v,v')$ where $v$ is in $V_n$ and $v'$ is in $V_{n+1}$ then the vertex $v'$ uniquely determines $e$. Thus the set of edges can be parametrized by assigning the coordinates of $v'$ to $e$ . Consequently, there is a natural one-to-one correspondence between the set of vertices minus the root, and the edges. From now on we will refer to this tree as the $p$-adic tree.

The weight function $\omega: V\to\mathbb R^+$ is given by $\omega(v)=\textrm{diameter}(v)$, so that if $v\in V_n$ then $\omega(v)=p^{-n}$.
\smallskip

\subsection{}

Notice that the decomposition $V=\bigcup_{n=0}^\infty V_n$ of the set of vertices implies that any complex valued function $f$ on $V$ is given by a sequence of complex valued functions $\{f_n\}$, where the domain of $f_n$ is $V_n$. In this paper we prefer to work with the latter notation $\{f_n\}$. Next, we can naturally identify the set $V_n$ with $p^n$ elements, with $\mathbb Z/p^n\mathbb Z$. Thus we can give $V_n$ the structure of a finite group, using addition modulo $p^n$. 
\smallskip

The key tool we will be using in the analysis below is the following Fourier transform on the space $\mathcal E^*(V)$ of complex valued functions on vertices of the $p$-adic tree. The transform is just the usual discrete Fourier transform on each $V_n$ where $V_n$ is viewed as $\mathbb Z/p^n\mathbb Z$, and it is given by
\begin{equation}\label{formula_for_fn}
\wh f_n(l)=\frac{1}{p^n}\sum_{k=0}^{p^n-1}f_n(k)e^{-2\pi i\frac{kl}{p^n}}, \ 0\leq l<p^n.
\end{equation}

One can easily show that 
\begin{equation*}
\sum_{0\leq s<p^j}e^\frac{-2\pi iks}{p^j}=\begin{cases}
0 & \textrm{ if }\ p^j\nmid k \\ 
p^j & \textrm{ if }\ p^j\mid k.
\end{cases}
\end{equation*}
This implies the following Fourier inversion formula.
\begin{equation}\label{FIof_f}
f_n(k)=\sum_{0\leq l<p^n}\wh f_n(l)e^{\frac{2\pi ikl}{p^n}}
\end{equation}

To distinguish between the domain and the range of the Fourier transform it will be convenient to introduce the ``dual" tree $\wh V$. However, since $\mathbb Z/p^n\mathbb Z$ is self-dual we set $\wh V=V$ and think of the Fourier transform on the $p$-adic tree as an isomorphism between $\mathcal E^*(V)$ and $\mathcal E^*(\wh V)$.
It is also useful to recall the Parseval's identity;
\begin{equation}\label{parsev}
\sum_{0\leq k<p^n}|f_n(k)|^2\,p^{-n}=\sum_{0\leq l<p^n}|\wh f_n(l)|^2
\end{equation}
which implies that the Fourier transform gives an isomorphism between the weighted Hilbert space $\ell^2(V, \omega)$ and the unweighted space $\ell^2(\wh V)$.
\bigskip

\section{The operator}

For vertices $v, v'\in V$, we will write $v' \sqsubset v$  if $v' \subset v$ and there is an edge between $v$ and $v'$. 
For a complex-valued function $f\in \mathcal E^*(V)$ on the set of vertices $V$ we define what amounts to the forward derivative on the tree $\{V, E\}$:

\begin{equation*}
Df(v)=\frac 1{\omega(v)}\left(f(v)-\frac 1{(\textrm{deg } v-1)} \sum_{\substack {v'\in V \\ v'\sqsubset\ v}}f(v')\right)
\end{equation*}
Here, deg $v$ is the degree of the vertex $v$. In particular, we will be interested in considering the action of $D$ on the Hilbert space $H$ consisting of weighted $\ell^2$ functions:
\begin{equation*}
H=\ell^2(V, \omega)=\{f:V\to \mathbb C \ :\  \sum_{v\in V} |f(v)|^2\omega(v)<\infty \}\\
\end{equation*}
The Hilbert space $H$ also has a decomposition $H =\bigoplus_{n=0}^\infty \ell^2(V_n, p^{-n})$ coming from the corresponding decomposition of the vertices $V=\bigcup_{n=0}^\infty V_n$. Using the labeling of formula \eqref{vlabel} the operator $D$ becomes,

\begin{equation}\label{formula_for_D}
D f_n(l)=p^{n}\left(f_n(l)-\frac1p\sum_{0\leq j<p}\ f_{n+1}(l+jp^n)\right).
\end{equation}


We can rewrite $D$ using  its Fourier transfrom as,
\begin{equation*}
Df_n(l)=p^n\left(\sum_{0\leq k<p^n}\wh f_n(k)e^{\frac{2\pi ikl}{p^n}}-\frac 1p \sum_{0\leq j<p}\ \sum_{0\leq k<p^{n+1}}\wh f_{n+1}(k)e^{\frac{2\pi ik(l+jp^n)}{p^{n+1}}}\right).
\end{equation*}
Using  the fact that
\begin{equation*}
\sum_{0\leq j<p}e^\frac{2\pi i kj}p=\left\{
\begin{array}{l}
0\ \ \ \textrm{ if }\ p\nmid  k \\ 
p\ \ \ \textrm{ if } \ p\mid k
\end{array}\right.
\end{equation*}
we can simplify the above equation to obtain the following formula for $D$:

\begin{equation}\label{DinFT}
Df_n(l)=p^n \sum_{0\leq k<p^n}\left(\wh f_n(k)-\wh f_{n+1}(pk)\right)e^{\frac{2\pi ikl}{p^n}}.
\end{equation}
\bigskip

In what follows we restrict the operator $D$ to its maximal Hilbert space domain $\mathcal D_{\max}(D)$ given by, 
\begin{equation*}
\mathcal D_{\max}(D)=\{f\in H\ :\ Df\in  H\}. 
\end{equation*}
If we denote $\wh H=\ell^2(\wh V)$, then the map $f\mapsto \wh f$ induces a Hilbert space isomorphism $H \cong \wh H$ by equation  \eqref{parsev}.  We introduce the notation
\begin{equation}\label{Dhat}
\wh D\wh f_n(k)=p^n\left(\wh f_n(k)-\wh f_{n+1}(pk)\right).
\end{equation}
The operator $\wh D$ will also be considered on  its maximal domain, $\mathcal D_{\max}(\wh D)=\{\wh f\in \wh H\ |\ \wh D\wh f\in  \wh H\}$. 
The Fourier transform on the $p$-adic tree establishes a unitary equivalence between $D$ and $\wh D$.
\smallskip

We start the analysis of $D$ with a computation of its kernel. Notice that formula \eqref{DinFT} shows 
\begin{equation}\label{kerD}
{\text Ker} D=\{f=(f_n)\ |\wh f_n(k)= \wh f_{n+1}(pk)\}.
\end{equation}

\begin{prop}
The kernel of the operator $D$ defined on its maximal domain $\mathcal D_{\max}(D)$ in $\mathcal H$ is trivial.
\end{prop}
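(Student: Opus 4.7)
The plan is to work on the Fourier side, where the kernel condition becomes transparent. By Parseval's identity \eqref{parsev}, the Fourier transform establishes a unitary isomorphism $H \cong \wh H$ that intertwines $D$ with $\wh D$, so it suffices to prove that $\ker \wh D = \{0\}$ on the maximal domain of $\wh D$ in $\wh H$. By \eqref{kerD}, an element $\wh f = (\wh f_n)$ lies in this kernel if and only if
\begin{equation*}
\wh f_n(k) = \wh f_{n+1}(pk) \quad \text{for all } n \geq 0 \text{ and } 0 \leq k < p^n.
\end{equation*}

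Next, I would iterate this relation to produce the ``chains'' it generates. Fixing any starting index $(n_0, k_0)$ with $0 \leq k_0 < p^{n_0}$, the recursion gives
\begin{equation*}
\wh f_{n_0 + j}(p^{j} k_0) = \wh f_{n_0}(k_0) \quad \text{for every } j \geq 0.
\end{equation*}
In particular, the value $c := \wh f_{n_0}(k_0)$ reappears as a coefficient of $\wh f_{n_0+j}$ at the distinct positions $p^j k_0 < p^{n_0+j}$ for every nonnegative integer $j$.

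Now I would invoke the condition $\wh f \in \wh H$, which by Parseval (equivalently the definition of $\ell^2(\wh V)$) says
\begin{equation*}
\|\wh f\|^2 = \sum_{n=0}^{\infty} \sum_{0 \leq l < p^{n}} |\wh f_n(l)|^2 < \infty.
\end{equation*}
The contributions from the chain starting at $(n_0,k_0)$ are mutually distinct terms of this sum, and they all equal $|c|^2$. Hence $\sum_{j=0}^\infty |c|^2 < \infty$, forcing $c = 0$. Since the starting index $(n_0,k_0)$ was arbitrary, every Fourier coefficient vanishes, so $\wh f \equiv 0$ and therefore $f = 0$.

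I do not anticipate any serious obstacle here; the argument is essentially a bookkeeping exercise once one notices that the kernel relation forces each coefficient to be repeated infinitely often along a chain, while square-summability cannot accommodate an infinite repetition of a nonzero value. The only mild subtlety is to verify that the positions $p^j k_0$ appearing in different chains are indeed distinct indices in $\wh V$, which is immediate because the pair $(n_0+j, p^j k_0)$ determines both the level and the position uniquely.
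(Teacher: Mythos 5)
Your proof is correct and follows essentially the same route as the paper's: pass to the Fourier side via Parseval, iterate the kernel relation $\wh f_n(k)=\wh f_{n+1}(pk)$ along the chain $(n_0+j,\,p^j k_0)$, and observe that square-summability of $\|\wh f\|^2$ cannot accommodate infinitely many repetitions of a nonzero value. No gaps.
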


\begin{proof}
Suppose $f \in \mathcal D_{\textrm{max}}(D)$ such that $Df=0$. From formula \eqref{DinFT} we see that the Fourier coefficients of $f$ satisfy $\wh f_n(k)= \wh f_{n+1}(pk)$. Let us assume that $\wh f_{n_0}(k_0) \neq 0$ for some $n_0, k_0$. Now we estimate the $\ell^2$ norm of $f$ as follows:
\begin{equation*} 
\|f\|^2= \sum_{n\geq 0} \sum_{0 \leq k < p^n}|\wh f_n(k)|^2 \geq \sum_{n\geq n_0} \sum_{0 \leq k < p^n}|\wh f_n(k)|^2 \geq  \sum_{i \geq 0} |\wh f_{n_0+i}(p^ik_0)|^2
\end{equation*}
Since $f \in \ker D$, $\wh f_{n_0}(k_0)=\wh f_{n_0+i}(p^ik_0)$ for each $i$. Therefore the sum $\sum_{i \geq 0} |\wh f_{n_0+i}(p^ik_0)|^2$ is infinite which contradicts the fact that $f$ is in $\ell^2(V, \omega)$. Thus, all Fourier coefficients of $f$ are zero and hence $f \equiv 0$.

\end{proof}

\begin{theo}
The operator $D$ defined on its maximal domain in $H$ is invertible with bounded inverse.\end{theo}

\begin{proof}

Given $g\in H$ we need to solve $Df_n(l)=g_n(l)$ for $f \in \mathcal D_{\textrm{max}}(D)$ in terms of $g$. This can be done easier with the Fourier transforms of $f$ and $g$. Using formula \eqref{DinFT} above for the Fourier transform of $Df_n(l)$ we get

\begin{equation}\label{Fourier_coeff_eqn}
\wh g_n(k)=p^n\Big(\wh f_n(k)-\wh f_{n+1}(pk)\Big).
\end{equation}

This equation has a unique solution in $H$ given by
\begin{equation}\label{dinvft}
\wh f_n(k)= \sum_{i=n}^{\infty}\frac{\wh g_i(l p^{\alpha -n+i})}{p^i}=:\wh D^{-1}\wh g_n(k),
\end{equation}
where $k$ is written in the form $k=lp^{\alpha}$ with $p \nmid l$. The limits in the above sum are determined by requiring that $\wh f_n(k)\to 0$ as $n\to\infty$. Using the Cauchy-Schwartz inequality we can estimate the  pointwise norm of $\wh D^{-1}$:
\begin{equation}\label{absolute_value_of _fn}
|\wh D^{-1}\wh g_n(k)|\leq \left(\sum_{i=n}^{\infty}\wh g_i(lp^{\alpha-n+i})^2\right)^{1/2} \cdot \left(\sum_{i=n}^{\infty} \frac1{p^{2i}}\right)^{1/2}= \frac{p^{-n}}{\sqrt{1-p^{-2}}} \cdot \|\wh g\|.
\end{equation}
Hence the formula for $\wh f_n(k)$ is well defined. Thus, we can now write 
\begin{equation*}
D^{-1}g_n(l)= \sum_{n=0}^{\infty}\sum_{0 \leq l < p^n} \wh f_n(k)e^{\frac{2\pi ikl}{p^n}}=\sum_{n=0}^{\infty}\sum_{0 \leq l < p^n} \sum_{i=n}^{\infty}\frac{\wh g_i(l p^{\alpha -n+i})}{p^i}\; e^{\frac{2\pi ikl}{p^n}}.
\end{equation*}
Below, we will prove that $D^{-1}Df=f$. The proof of $DD^{-1}g=g$  can be done in a similar way.

In fact, we will work with the Fourier transform and show that $\wh D^{-1}\wh D \wh f=\wh f$. Using formula \eqref{dinvft} we write
\begin{equation*}
\wh D^{-1}\wh D \wh f_n(k)= \sum_{i=n}^{\infty} \frac {\wh D \wh f_i(lp^{\alpha -n+i})}{p^i}=\sum_{i=n}^{\infty}\wh f_i(lp^{\alpha -n+i})-\sum_{i=n}^{\infty}\wh f_{i+1}(lp^{\alpha -n+i+1})
\end{equation*} 
where $k=lp^{\alpha }$. Since the above sum is telescopic the only term that survives is $\wh f_n(lp^{\alpha})= \wh f_n(k)$. Hence the claim is proved.
\smallskip

Estimating the norm of $D^{-1} $ using formula \eqref{absolute_value_of _fn} we get:
\begin{equation*}
\begin{aligned}
\|D^{-1} g\|^2 &= \sum_{n=0}^{\infty}\sum_{0 \leq k < p^n}|\wh D^{-1}\wh g_n(k)|^2 \leq \sum_{n=0}^{\infty}\sum_{0 \leq k < p^n} \left(\frac{p^{-2n}}{1-p^{-2}}\right)\|\wh g\|^2\\
&= \frac 1{(1-p^{-2})(1-p^{-1})}\|\wh g\|^2
\end{aligned}
\end{equation*}
showing that the inverse is bounded.

\end{proof}
\bigskip

We end this section with a discussion of the unique continuation property of the operator $D$ on the space $\mathcal E^*(V)$ of all functions on the vertices of the $p$-adic tree.  In order to discuss the theorem we need a concept of a limit at the boundary of the tree. To define this limit first we make the following observation.
If $\phi \in \mathcal E(\mathbb Z_p)$ then we can find a positive integer $n_{\phi}$ and a function $\tilde{\phi}$ such that,
\begin{enumerate}[i.]
\item $\tilde{\phi} \in \ell^2(V_n, \omega)$ for all $n\geq n_{\phi}$
\item $\tilde{\phi}(v)=\phi(x)$ for every $x\in v$.
\end{enumerate} 
\smallskip

Notice that, for the integral of $\phi \in \mathcal E(\mathbb Z_p)$ we have the formula,
\begin{equation*}
\int\phi(x)\,d_px=\sum_{v\in V_n} \tilde{\phi}(v)\,p^{-n}
\end{equation*}
valid for every $n\geq n_\phi$.
\smallskip

\begin{defin}
Let $f=\{f_n\}_{n=0}^\infty\in \mathcal E^*(V)$ be a function on $V$. Then $f$ is said to converge weakly to a distribution $T\in \mathcal E^*(\mathbb Z_p)$ on the boundary of the tree $\{V,E\}$, if for every $\phi \in \mathcal E(\mathbb Z_p)$,
\begin{equation*}
\lim_{n\to\infty}\sum_{v\in V_n} f_n(v)\tilde{\phi} (v)\,p^{-n}:=T(\phi)
\end{equation*}
exists. In this case, we say that $f_n$ has a limit at the boundary, namely $T$.
\end{defin}

The following key lemma provides a necessary and sufficient condition for the weak convergence of a sequence of functions living on the $p$-adic tree. 
\begin{lem}\label{key_lemma1}
Let $f=\{f_n(k)\ :\ n\in\mathbb N,\  0\leq k<p^n\}$ be a complex-valued function on the vertices of the $p$-adic tree. Then $f$ has a limit at the boundary if and only if \ $\lim_{n\to\infty}\wh f_n (lp^{n-m})$ exists for every $p\nmid l$ and $m\leq n$.
\end{lem}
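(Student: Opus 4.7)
The plan is to translate weak boundary convergence into the Fourier picture and exploit the finite Fourier support of test functions. First I would establish a Plancherel-type identity on $V_n\cong\mathbb Z/p^n\mathbb Z$: inserting the inversion formula \eqref{FIof_f} for $f_n$ into the pairing $\sum_{v\in V_n}f_n(v)\tilde\phi(v)\,p^{-n}$ and switching the order of summation gives
\begin{equation*}
\sum_{v\in V_n}f_n(v)\tilde\phi(v)\,p^{-n}=\sum_{j=0}^{p^n-1}\wh f_n(j)\,\wh{\tilde\phi}_n(-j),
\end{equation*}
where $\wh{\tilde\phi}_n$ is the discrete Fourier transform on $V_n$ given by \eqref{formula_for_fn}. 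The sum on the right is a Riemann sum for the defining integral of $\wh\phi$; once $n\geq n_\phi$ this sum is exact because $\phi$ is constant on each ball of diameter $p^{-n}$, so $\wh{\tilde\phi}_n(-j)=\wh\phi([-j/p^n])$.

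The crucial structural input is that $\wh\phi\in\mathcal E(\wh{\mathbb Z}_p)$ has only finitely many nonzero coefficients, and every nontrivial class in $\mathbb Q_p/\mathbb Z_p$ has a unique representative of the form $l/p^m$ with $p\nmid l$ and $1\leq l<p^m$. Matching $[-j/p^n]=[l/p^m]$ forces $j\equiv -lp^{n-m}\pmod{p^n}$, i.e.\ $j=(p^m-l)p^{n-m}$, and since $p\nmid l$ iff $p\nmid(p^m-l)$ when $m\geq 1$, the indices that actually appear are precisely those of the form $l'p^{n-m}$ with $p\nmid l'$. Thus for $n$ sufficiently large the pairing reduces to the finite sum
\begin{equation*}
\sum_{v\in V_n}f_n(v)\tilde\phi(v)\,p^{-n}=\wh f_n(0)\,\wh\phi([0])+\sum_{(l,m)\in S_\phi}\wh f_n\bigl((p^m-l)p^{n-m}\bigr)\,\wh\phi([l/p^m]),
\end{equation*}
where $S_\phi$ is a finite index set determined by the Fourier support of $\phi$.

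For the necessity direction I would apply weak boundary convergence to the individual characters $\phi=\chi_{m,l}$ of \eqref{nkchar} and to $\phi\equiv 1$. Since $\wh{\chi_{m,l}}$ is a delta at $[l/p^m]$, the pairing collapses to a single Fourier coefficient $\wh f_n((p^m-l)p^{n-m})$ whose limit must exist, and varying $l$ over integers with $p\nmid l$ produces exactly the family of limits claimed in the lemma; the trivial character $\phi\equiv 1$ furnishes the limit of $\wh f_n(0)$. For the sufficiency direction, the displayed finite-sum formula combined with the hypothesis implies that every summand converges as $n\to\infty$, so the pairing does, and the assignment $T(\phi):=\lim_n\sum_v f_n(v)\tilde\phi(v)\,p^{-n}$ defines the desired boundary distribution; linearity in $\phi$ is immediate, and continuity is automatic as recalled earlier in the paper.

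The main obstacle is purely bookkeeping: carefully tracking the bijection between the finite Fourier support of $\phi$ (indexed by pairs $(l,m)$ with $p\nmid l$, $1\leq l<p^m$) and the discrete indices $j\in\{0,\dots,p^n-1\}$, and handling the conjugation/sign in $\overline{\chi_a}$ consistently. Once this match-up is written down, both implications reduce to a one-line argument on the finite sum.
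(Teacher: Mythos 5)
Your proof is correct and takes essentially the same route as the paper: both directions rest on the observation that the pairing of $f_n$ against a character $\chi_{m,l}$ collapses to the single Fourier coefficient $\wh f_n(l'p^{n-m})$, combined with the fact that test functions have finite Fourier expansions so that linearity finishes the sufficiency direction. The Plancherel-type identity you insert is just a more systematic packaging of the same computation the paper does directly.
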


\begin{proof}
Suppose $f_n$ has a limit at the boundary. Using formula \eqref{formula_for_fn} we see that
\begin{equation*}
\wh f_n(lp^{n-m})=\frac{1}{p^n}\sum_{k=0}^{p^n-1}f_n(k)e^{-2\pi i\frac{kl}{p^m}}.
\end{equation*}
Notice that the character $\chi_{n,k}$ in formula \eqref{nkchar} is also a test function and so the expression $e^{-2\pi i\frac{kl}{p^m}}$ appearing on the right hand side of the above equation is equal to $\tilde\chi_{m,l}(n,k)$. By our assumption, the limit of the right hand side exists as $n\to \infty$ for any $p\nmid l$ and $m\leq n$. So the limit of the left hand side also exists as $n \rightarrow \infty$.

Now suppose that $\lim_{n\to \infty} \wh f_n(lp^{n-m})$ exists for all $p \nmid l$ and $m\leq n$. Every test function is a finite linear combination of characters because, test functions have finite Fourier expansions. Consequently, by linearity, it is sufficient to check that the limit
\begin{equation*}
\lim_{n\to\infty}\sum_{v\in V_n} f_n(v)\tilde\chi_{m,l}(v)\,p^{-n}
\end{equation*} 
exists.
But the existence of the above limit is guaranteed by the hypothesis since
\begin{equation*}
\sum_{v\in V_n} f_n(v)\tilde\chi_{m,l}(v)\,p^{-n}=\frac{1}{p^n}\sum_{k=0}^{p^n-1}f_n(k)e^{-2\pi i\frac{kl}{p^m}}=\wh f_n(lp^{n-m}).
\end{equation*}

\end{proof}
\bigskip

The following theorem describes the unique continuation property of the operator $D$. 
\begin{theo}\label{unique_cont_prop}
Let $f\in$ Ker $D$. Then the following are true.
\begin{enumerate}[(i).]
\item $f$ has a limit at the boundary.
\item If the limit of $f$ at the boundary is equal to zero then $f$ is identically zero.
\item If $T\in \mathcal E^*(\mathbb Z_p)$ then there exists $g\in$ Ker $D$ such that the limit of $g$ at the boundary is equal to $T$.
\end{enumerate}
\end{theo}

\begin{proof}

To prove part (i) of the theorem we observe from formula \eqref{DinFT} that
the operator $D$ on $\mathcal E^*(V)$ has an infinite dimensional kernel. 
Using the previous lemma it suffices to show that if $f\in$ Ker $D$ then the limit $\lim\limits_{n\to\infty}\wh f_n (lp^{n-m})$ exists for every $p\nmid l$ and $m$. 
If $p\nmid l$ and $n \geq m$ then
\begin{equation*}
\begin{aligned}
\wh f_n (lp^{n-m})&=\wh f_{n-1} (lp^{n-m-1})=\wh f_{n-2} (lp^{n-m-2})=\ldots\\
&=\wh f_m(l)
\end{aligned}
\end{equation*}
So $\lim_{n\to\infty}\wh f_n (lp^{n-m})$ exists and is equal to $\wh f_m(l)$. 
\smallskip

To prove the second part of the theorem, we first establish another formula for $f_n(k)$ in terms of its Fourier coefficients.

Starting with formula \eqref{FIof_f}, we decompose the sum on the right hand side of \eqref{FIof_f} such that one sum is over all $l$ divisible by $p$ and the other over all $l$ such that $p \nmid l$ to obtain
\begin{equation*}
f_n(k)=\sum_{\substack{p\ \nmid\ l \\0\leq l< p^n}} \wh f_n(l)e^{\frac{2\pi ikl}{p^n}} + \sum_{0\leq l< p^{n-1}}\wh f_n(lp)e^{\frac{2\pi ikl}{p^{n-1}}}
\end{equation*}
Iterating this we get,
\begin{equation}\label{expression_for_f}
\begin{aligned}
f_n(k)&=\sum_{\substack{p\ \nmid\ l \\0\leq l< p^n}} \wh f_n(l)e^{\frac{2\pi ikl}{p^n}} + \sum_{\substack{p\ \nmid\ l \\0\leq l< p^{n-1}}}\wh f_n(lp)e^{\frac{2\pi ikl}{p^{n-1}}} + \cdots +\sum_{0\leq l< p}\wh f_n(lp^{n-1})e^{\frac{2\pi ikl}{p}}\\
&= \sum_{m=1}^{n}\sum_{\substack{(l,p)=1 \\ 0\leq l<p^m}} \wh f_n (lp^{n-m})e^\frac{2\pi ikl}{p^m}.
\end{aligned}
\end{equation}

 If the limit of $f$ at the boundary is equal to zero then 
\begin{equation*}
\lim_{n\to \infty}\frac1{p^n}\sum_{0 \leq k<p^n}f_n(k)e^{-\frac{2\pi ikl}{p^m}}=\lim_{n\rightarrow \infty} \wh f_n(lp^{n-m})=0
\end{equation*}
which in turn implies from part (i) that $\wh f_m(l)=0$ for each $m$ and $0\leq l< p^m$ such that  $p\nmid l $. Using formula \eqref{expression_for_f} above we see that 
\begin{equation*}
f_n(k)=\sum_{m=1}^{n}\sum_{\substack{(l,p)=1 \\ 0\leq l<p^m}} \wh f_n (lp^{n-m})e^\frac{2\pi ikl}{p^m}=\sum_{m=1}^{n}\sum_{\substack{(l,p)=1 \\ 0\leq l<p^m}} \wh f_m (l)e^\frac{2\pi ikl}{p^m}.
\end{equation*}
Therefore $f_n(k)\equiv 0$ for each $n$.
\smallskip

Finally, we prove the third part of the statement. Any distribution $T\in \mathcal E^*(\mathbb Z_p)$ is completely determined by its Fourier coefficients  $T_{n,k}:=T(\overline{\chi_{n,k}})$ which, moreover, can be arbitrary numbers. For given $T_{m,l}$ we would like to find $g\in $ Ker $D$ such that 

\begin{equation*}
\begin{aligned}
\lim_{n \rightarrow \infty}\sum_{v\in V_n} g_n(v)\tilde\chi_{m,l}(v)\,p^{-n}&=\lim_{n \rightarrow \infty}p^{-n} \sum_{0 \leq k < p^n} g_n(k)e^{\frac{-2\pi ikl}{p^m}}=\lim_{n\rightarrow \infty} \wh g_n(lp^{n-m})=\\
&=T_{m,l}.\\
\end{aligned}
\end{equation*}
If $p\nmid l$ then from part (i) we see that the Fourier coefficients of $g$ are given by $\wh g_m(l)=T_{m,l}$. If $p \mid l$ let $l=p^rk$ with $\gcd(p,k)=1$. Then: 
\begin{equation*}
\wh{g_n}(lp^{n-m})=\wh g_{m-r}(k)=T_{m-r,l}. 
\end{equation*}
Thus $g\in$ Ker $D$ is completely determined by the conditions,
\begin{equation*}
\wh g_m(l)=\left\{
\begin{array}{l}
T_{m,l}\ \ \ \ \ \,\textrm{ if }p\mid l \\ 
T_{m-r,l}\ \ \ \textrm{ if }l=p^r k \textrm{ with } \gcd(p,k)=1.
\end{array}\right.
\end{equation*}
\end{proof}

\newpage

\section{The spectral triple}\label{sec4}

The goal of this section is to construct a spectral triple for the C$^*$- algebra $A=C(\mathbb Z_p)$, the space of continuous functions on $\mathbb Z_p$.  For this construction we use the Hilbert space $H$ and the (non self-adjoint) operator $D$ that was introduced in the previous section. 
First, we construct a new Hilbert space $\mathcal H=H \bigoplus  H$ and a new operator $\mathcal D$  on $\mathcal H$ given by:
\begin{equation*}
\mathcal D= \left(
\begin{array}{cc}
0 & D \\
D^{\ast} & 0
\end{array}\right).
\end{equation*}
The Hilbert space $\mathcal H$ has a natural $\mathbb Z/2\mathbb Z$ grading with respect to which the operator $\mathcal D$ is odd (off diagonal).

We let  $\mathcal A$ be the space of all Lipschitz functions on $\mathbb Z_p$ which is a dense $*$-subalgebra of $A$. Recall that a function $\phi:\mathbb Z_p \to \mathbb C$ is called Lipschitz ( or Lipschitz continuous) if there is a constant $c>0$ such that $|\phi(x)-\phi(y)|\leq c\,\rho_p(x,y)$ for all $x,y \in \mathbb Z_p$. The smallest such constant, called the Lipschitz norm of $\phi$, will be denoted by $L(\phi )$:
\begin{equation*}
L(\phi ) = \sup\limits_{x\ne y} \frac{|\phi (x) - \phi (y)|}{\rho_p(x, y)}.
\end{equation*}

The algebra $A$ has a natural representation $\pi$ in $H$ given by $\pi(\phi)f_n(k)=\phi(k)f_n(k)$. 
Define $\Pi: A \to \mathcal B(\mathcal H)$  to be  $\Pi=\pi\oplus\pi$, i.e.

\begin{equation*}
\Pi(\phi)\left(\begin{array}{c}f_n(k)\\g_n(k)\end{array}\right)=\left(\begin{array}{c}\phi(k)f_n(k)\\ \phi(k)g_n(k)\end{array}\right). 
\end{equation*}
The representation $\Pi$ is even (diagonal) with respect to the grading on $\mathcal H$. Additionally we make the following simple observation.

\begin{prop}\label{ndprop}
The representation $\Pi: A \to \mathcal B(\mathcal H)$ is faithful and non-degenerate.
\end{prop}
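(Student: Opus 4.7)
The plan is to reduce both statements to elementary observations about the multiplication representation $\pi$ on $H$, since $\Pi = \pi \oplus \pi$ acts diagonally and hence is faithful (resp.\ non-degenerate) iff $\pi$ is.

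For non-degeneracy, I would simply invoke unitality: the constant function $\mathbf{1} \in C(\mathbb Z_p)$ satisfies $\pi(\mathbf{1})f_n(k) = f_n(k)$ for every $f \in H$ and every pair $(n,k)$, so $\pi(\mathbf{1}) = I_H$ and therefore $\Pi(\mathbf{1}) = I_{\mathcal H}$. In particular $\Pi(A)\mathcal H = \mathcal H$, which is the required non-degeneracy condition.

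For faithfulness, suppose $\phi \in A$ satisfies $\Pi(\phi) = 0$, equivalently $\pi(\phi) = 0$. The plan is to test this against the standard basis vectors of $H$: for any fixed $n \geq 0$ and $0 \leq k < p^n$, the function $\delta_{n,k} \in \ell^2(V_n, p^{-n}) \subset H$ given by $\delta_{n,k}(j) = 1$ if $j = k$ and zero otherwise lies in $H$, and $\pi(\phi)\delta_{n,k}$ evaluated at $k$ equals $\phi(k)$. Hence $\phi(k) = 0$ for every nonnegative integer $k$.

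The final step is to use density: every $x \in \mathbb Z_p$ can be written as $x = \sum_{i=0}^\infty x_i p^i$, and the partial sums $s_n = \sum_{i=0}^{n-1} x_i p^i$ are nonnegative integers with $\rho_p(x, s_n) \leq p^{-n} \to 0$. So $\mathbb Z_{\geq 0}$ is dense in $\mathbb Z_p$, and the continuity of $\phi$ forces $\phi \equiv 0$. No part of this argument looks delicate; the only point to be careful about is the identification of $V_n$ with $\{0, 1, \ldots, p^n - 1\} \subset \mathbb Z \subset \mathbb Z_p$, which is exactly the parametrization introduced via the proposition labeled \eqref{vlabel}, so the expression $\phi(k)$ in the definition of $\pi$ makes sense as the value of $\phi$ at the integer $k$ viewed inside $\mathbb Z_p$.
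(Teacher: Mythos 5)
Your proof is correct and follows essentially the same route as the paper: non-degeneracy from $\Pi(\mathbf{1})=I$, and faithfulness from the diagonal nature of $\pi$ together with the density of the nonnegative integers in $\mathbb Z_p$ and continuity of $\phi$. The only cosmetic difference is that the paper phrases the faithfulness step as the isometry $\|\Pi(\phi)\|=\sup_{k\ge 0}|\phi(k)|=\sup_{x\in\mathbb Z_p}|\phi(x)|$, whereas you test directly against the basis vectors $\delta_{n,k}$ to show the kernel is trivial.
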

\begin{proof}
Notice that 
\begin{equation*}
||\Pi(\phi)||=||\pi(\phi)||=\sup_{k=0,1,2\ldots}|\phi(k)|,
\end{equation*}
since $\pi(\phi)$ is a diagonal operator in $H$. But the set of nonnegative integers is dense in $\mathbb Z_p$, and $\phi$ is continuos, so $||\Pi(\phi)||=\sup_{x\in\mathbb Z_p}|\phi(x)|$, and so the representation of  $A=C(\mathbb Z_p)$ is faithful. Since $\Pi(1)=I$, the representation is also non-degenerate.

\end{proof}

The following theorem is our main result in this section.

\begin{theo}
The triple $(\mathcal A, \mathcal H,\mathcal D)$ defined above is an even spectral triple.
\end{theo}

\begin{proof}

If we can show that $D^{-1}$ is a Hilbert-Schmidt operator, then the compactness of $(1+\mathcal D^2)^{-1/2}$ easily follows from it by using functional calculus.
To prove that $D^{-1}$ is Hilbert-Schmidt we estimate its Hilbert-Schmidt norm as follows.
\smallskip

Using formula \eqref{dinvft} we write $D^{-1}$ in Fourier transform as,
\begin{equation*}
\wh D^{-1}\wh g_n(k)=\sum_{i=0}^{\infty} \sum_{0 \leq r < p^i}K(n,k,i,r) \wh g_i(r).
\end{equation*}
The integral kernel $K$ in the above expression is given by
\begin{equation*}
\begin{aligned}K(n,k,i,r)=\begin{cases}
\frac 1{p^i} & \textrm{  if } i \geq n \textrm{ and } r=lp^{\alpha -n+i} \\ 
0 & \textrm{  otherwise }
\end{cases}\\
\end{aligned}
\end{equation*}
where $\alpha$ is the integer such that $k=lp^{\alpha}$ with $p \nmid l$.
From this formula we can now compute the Hilbert-Schmidt norm of $D^{-1}$ as follows.
\begin{equation*}
\begin{aligned}
\|D^{-1}\|_{HS}^2&=\sum_{i=0}^{\infty}\sum_{0 \leq r < p^i} \sum_{n=0}^{\infty} \sum_{0 \leq k < p^n}|K(n,k,i,r)|^2= \sum_{n=0}^{\infty} \sum_{0 \leq k < p^n}\sum_{i=n}^{\infty} |K(n,k,i,lp^{\alpha -n +i})|^2=\\
&= \sum_{n=0}^{\infty} \sum_{0 \leq k < p^n}\sum_{i=n}^{\infty} \frac 1{p^{2i}} =\frac{1}{(1-p^{-1})(1-p^{-2})}<\infty.
\end{aligned}
\end{equation*}
Thus, $D^{-1}$ is indeed a Hilbert-Schmidt operator.
\smallskip

We would now like to prove that the commutator $[\mathcal D, \Pi(\phi)]$ is bounded for any $\phi\in\mathcal A$.
Notice that:

\begin{equation}\label{Dcomm}
[\mathcal D, \Pi(\phi)] \left(
\begin{array}{c}
f_n(k)\\g_n(k)
\end{array}\right)=\left(
\begin{array}{c}
D\pi(\phi) g_n(k)-\pi(\phi ) Dg_n(k)\\ D^{\ast}\pi(\phi) f_n(k)-\pi(\phi)  D^{\ast} f_n(k).
\end{array}\right)
\end{equation}
The first component of the above expression can be written as
\begin{equation}\label{comform}
\begin{aligned}
D\pi(\phi) g_n(k)-\pi(\phi ) Dg_n(k)&=p^n \left[ \phi(k)g_n(k)-\frac 1p \sum_{i=0}^{p-1} \phi(k+ip^n) g_{n+1}\big( k+ip^n\big)\right]\\
&-p^n\left[ \phi(k)g_n(k)-\frac 1p \sum_{i=0}^{p-1} \phi(k)g_{n+1}\big( k+ip^n\big)\right]\\
&=p^{n-1}\sum_{i=0}^{p-1}\big[\phi(k)-\phi(k+ip^n)\big]g_{n+1}( k+ip^n).
\end{aligned}
\end{equation}
Thus, using the Lipschitz condition, we can estimate the pointwise norm of the commutator as follows.
\begin{equation*}
\begin{aligned}
\big| D\pi(\phi) g_n(k)-\pi(\phi ) Dg_n(k)\big|&\leq p^{n-1}\sum_{i=0}^{p-1}\big|\phi(k)-\phi(k+ip^n)\big||g_{n+1}( k+ip^n)|\\
& =\frac {L(\phi)}p \sum_{i=0}^{p-1}|g_{n+1}( k+ip^n)|
\end{aligned}
\end{equation*}
 
Consequently, we have:
\begin{equation*}
\begin{aligned}
\| D \pi(\phi) g-\pi(\phi ) D g\|^2 &=\sum_{n=0}^{\infty}\ \sum_{0\leq k < p^n}p^{-n}\big| D\pi(\phi) g_n(k)-\pi(\phi ) Dg_n(k)\big|^2\\
&\leq \sum_{n=0}^{\infty}\ \sum_{0\leq k < p^n}p^{-n-2}L(\phi)^2{\left(\sum_{i=0}^{p-1}|g_{n+1}( k+ip^n)|\right)}^2.
\end{aligned}
\end{equation*}
Using the fact that $\left(\sum_{k=1}^n a_k\right)^2\leq n \sum_{k=1}^n a_k^2$ for any $n$ number of positive numbers $a_k$  we see that,
\begin{equation*}
\begin{aligned}
\| D \pi(\phi) g-\pi(\phi ) D g\|^2 & \leq \sum_{n=0}^{\infty}\ \sum_{0\leq k < p^n} \ \sum_{0 \leq i \leq p-1} p^{-n-1} L(\phi)^2 \left|g_{n+1}( k+ip^n)\right|^2\\
&\leq \sum_{n=0}^{\infty}\ \sum_{0\leq l < p^{n+1}} p^{-n-1} L(\phi)^2 \left|g_{n+1}( l)\right|^2\\
& = L(\phi)^2 \|g\|^2.
\end{aligned}
\end{equation*}
In other words $\| D \pi(\phi) -\pi(\phi ) D \|\leq L(\phi)$. 
By taking the adjoint in the above formula we also get the norm estimate $\| D^{\ast}\pi(\phi) -\pi(\phi ) D^{\ast}\|=\| D \pi(\phi) -\pi(\phi ) D \|\leq L(\phi)$.
Consequently the commutator $[\mathcal D, \Pi(\phi)]$ is bounded for any $\phi\in\mathcal A$. This concludes that $(\mathcal A, \mathcal H,\mathcal D)$ is an even spectral triple.

\end{proof}

\section{The distance}

In this last part of the paper we study metric properties of the spectral triple constructed in the previous section. We will consider the Lipschitz seminorm as a function (possibly infinite valued) on the whole C$^*$-algebra, $A=C(\mathbb Z_p)$. i.e. $L:A\to [0,\infty]$.
The spectral seminorm, $L_{\mathcal D}(\phi)$,  of $\phi\in A$, is defined by $L_{\mathcal D}(\phi)=\|[\mathcal D, \Pi(\phi)]\|$, where unbounded operators are considered to have infinite norm. It is clear from formula \eqref{Dcomm} that $L_{\mathcal D}(\phi)=\|[D, \pi(\phi)]\|$.

The seminorms $L$ and $L_{\mathcal D}$ define two metrics dist and $\textrm{dist}_{\mathcal D}$ respectively, on the space of $p$-adic integers $\mathbb Z_p$ via the Connes distance formula:
 
\begin{equation*}
\begin{aligned}
\textrm{dist}(x,y)&=\sup_{\phi\in A} \{|\phi(x)-\phi(y)|: L(\phi)\leq 1\}, \,\, x,y \in \mathbb Z_p,\;\;\;\; \textrm{ and}\\
\textrm{dist}_{\mathcal D}(x,y)&=\sup_{\phi\in A} \{|\phi(x)-\phi(y)|: L_{\mathcal D}(\phi)\leq 1\}, \,\, x,y \in \mathbb Z_p.
\end{aligned}
\end{equation*}

The goal of this section is to compare these two metrics. First we observe the following general fact.

\begin{prop}
The metric, $\textrm{dist}(x,y)$, induced by the Lipschitz seminorm is equal to the usual p-adic metric.
\end{prop}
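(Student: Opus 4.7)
The plan is to prove the two inequalities $\textrm{dist}(x,y)\leq \rho_p(x,y)$ and $\textrm{dist}(x,y)\geq \rho_p(x,y)$ separately. Both directions are standard Kantorovich--Rubinstein type arguments, and since the underlying metric space $(\mathbb{Z}_p,\rho_p)$ is a genuine compact metric space, the argument requires nothing $p$-adic beyond the fact that $\rho_p$ is a metric.

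For the easy direction, suppose $\phi\in A$ satisfies $L(\phi)\leq 1$. Then by the very definition of the Lipschitz seminorm, $|\phi(x)-\phi(y)|\leq L(\phi)\,\rho_p(x,y)\leq \rho_p(x,y)$ for all $x,y\in \mathbb{Z}_p$. Taking the supremum over all such $\phi$ yields $\textrm{dist}(x,y)\leq \rho_p(x,y)$.

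For the reverse inequality, I would exhibit an explicit admissible test function that realizes the desired value. Fix $x\in\mathbb{Z}_p$ and define $\phi_x:\mathbb{Z}_p\to\mathbb{R}$ by $\phi_x(z):=\rho_p(z,x)$. This function is continuous (in fact Lipschitz) on $\mathbb{Z}_p$, so $\phi_x\in A$. By the triangle inequality for $\rho_p$, one has $|\phi_x(z)-\phi_x(w)|=|\rho_p(z,x)-\rho_p(w,x)|\leq \rho_p(z,w)$, so $L(\phi_x)\leq 1$ and $\phi_x$ is admissible in the supremum defining $\textrm{dist}$. Since $\phi_x(x)=0$ and $\phi_x(y)=\rho_p(x,y)$, we obtain
\begin{equation*}
\textrm{dist}(x,y)\geq |\phi_x(x)-\phi_x(y)|=\rho_p(x,y).
\end{equation*}

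Combining the two estimates gives the desired equality $\textrm{dist}(x,y)=\rho_p(x,y)$. I do not anticipate any genuine obstacle here: the only subtle point is verifying that $\phi_x$ is indeed in $A=C(\mathbb{Z}_p)$, which is immediate from its Lipschitz property. The result is essentially a reformulation of the classical fact that the Lipschitz seminorm recovers the original metric on any compact metric space, and its inclusion here serves as the benchmark against which the spectral metric $\textrm{dist}_{\mathcal D}$ will be compared in the subsequent results.
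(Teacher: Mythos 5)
Your proof is correct and follows essentially the same route as the paper: the easy bound from the definition of $L$, and the reverse bound via the test function $\phi_x(z)=|z-x|_p$, whose admissibility follows from the triangle inequality. No further comment is needed.
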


\begin{proof}
For every $\phi\in A$ with $L(\phi)\leq 1$ we have $|\phi(x)-\phi(y)|\leq |x-y|_p$. Consequently, $\textrm{dist}(x,y)\leq |x-y|_p$.
To prove the equality we consider the function $\phi_x(z)=|z-x|_p$ which is Lipschitz continuous with $L(\phi_x)=1$. Moreover, for this function we have
$|\phi_x(x)-\phi_x(y)|= |x-y|_p$, and the result follows.
\end{proof}

The results in this section are a consequence of the following two lemmas describing the spectral seminorm $L_{\mathcal D}$. In the first lemma we give a concrete formula for the norm $\|[D, \pi(\phi)]\|$, and then use it to establish inequalities showing equivalence of seminorms $L$ and $L_{\mathcal D}$ in the second lemma.

\begin{lem}\label{LDlemma}
For $\phi\in \mathcal A$ we have:
\begin{equation*}
L_{\mathcal D}(\phi)=\|[D, \pi(\phi)]\|=\left(\sup_{\substack {n \\ 0\leq k < p^n}}\frac1p\sum_{i=1}^{p-1}\frac{|\phi(k)-\phi(k+i p^n)|^2}{|k-(k+ip^n)|_p^2}\right)^{1/2}.
\end{equation*}
\end{lem}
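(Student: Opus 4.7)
The plan is to use formula \eqref{comform} to expose a block structure of the commutator on $H=\bigoplus_{n\geq 0}\ell^2(V_n,p^{-n})$, and then compute the norm of each block by a direct Cauchy-Schwarz calculation. The first observation I would make is that \eqref{comform} expresses $[D,\pi(\phi)]g_n(k)$ purely in terms of $g_{n+1}$; equivalently, the commutator carries each level-$(n{+}1)$ summand of $H$ into the level-$n$ summand, and the images live in pairwise orthogonal subspaces of $H$. Hence $[D,\pi(\phi)]$ splits as a sum of operators $T_n\colon\ell^2(V_{n+1},p^{-n-1})\to\ell^2(V_n,p^{-n})$ with mutually orthogonal ranges, and $\|[D,\pi(\phi)]\|=\sup_n\|T_n\|$.

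Next I would refine the decomposition. Since $(k,i)\mapsto k+ip^n$ is a bijection of $\{0,\ldots,p^n-1\}\times\{0,\ldots,p-1\}$ with $\{0,\ldots,p^{n+1}-1\}$, for each fixed $n$ the $p^n$ coordinate subspaces
\begin{equation*}
\mathcal S_{n,k}=\mathrm{span}\{e_{k+ip^n}:0\leq i<p\}\subset \ell^2(V_{n+1},p^{-n-1}),\qquad 0\leq k<p^n,
\end{equation*}
are mutually orthogonal and span the whole level, and $T_n$ sends $\mathcal S_{n,k}$ into the one-dimensional subspace $\mathbb C\cdot e_k\subset\ell^2(V_n,p^{-n})$. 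Thus $T_n$ is genuinely block-diagonal, $T_n=\bigoplus_{k}T_{n,k}$, where $T_{n,k}(x)=p^{n-1}\sum_{i=0}^{p-1}[\phi(k)-\phi(k+ip^n)]\,x_i$, and consequently $\|[D,\pi(\phi)]\|=\sup_{n,k}\|T_{n,k}\|$.

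The norm of each $T_{n,k}$ is then a Cauchy-Schwarz calculation that I would execute carefully with the weights. Writing $c_i=\phi(k)-\phi(k+ip^n)$, the domain $\mathcal S_{n,k}\cong\mathbb C^p$ carries the inner product $p^{-n-1}\sum_i x_i\overline{y_i}$ while the codomain $\mathbb C\cdot e_k$ carries $p^{-n}a\overline{b}$, so a direct Cauchy-Schwarz estimate with equality at $x_i=\overline{c_i}$ yields $\|T_{n,k}\|^2=p^{2n-1}\sum_{i=0}^{p-1}|c_i|^2$. Since $c_0=0$ the $i=0$ term drops out, and the identity $|k-(k+ip^n)|_p=|ip^n|_p=p^{-n}$ for $1\leq i\leq p-1$ (where $p\nmid i$) lets me rewrite $p^{2n-1}=\tfrac{1}{p}|ip^n|_p^{-2}$, which reproduces the stated formula after taking the supremum over $(n,k)$. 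The only delicate point---and the main thing to get right---is bookkeeping of the two weights $p^{-n}$ and $p^{-n-1}$ in the Cauchy-Schwarz step; everything else is formal once the block decomposition has been recognized.
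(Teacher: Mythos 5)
Your proof is correct, and it packages the computation more structurally than the paper does. The paper works with an arbitrary $g\in H$ throughout: it proves the upper bound by Cauchy--Schwarz on the full sum over $(n,k)$, and then proves the matching lower bound by choosing, for each $\epsilon>0$, a near-maximizing pair $(n_\epsilon,k_\epsilon)$ and a test vector $g^\epsilon$ supported on the $p$ children of $k_\epsilon$ with values $\overline{\phi(k_\epsilon)}-\overline{\phi(k_\epsilon+ip^{n_\epsilon})}$ (up to scale) --- which is exactly the Cauchy--Schwarz equality case you identify, just inserted by hand. Your route makes the underlying reason for equality visible from the start: since $[D,\pi(\phi)]$ carries each fiber $\mathcal S_{n,k}$ into $\C\cdot e_k$ and these blocks have mutually orthogonal domains and ranges, the operator norm is the supremum of the block norms, and each block is a rank-one map $\mathbb C^p\to\mathbb C$ whose norm is computed exactly (your weight bookkeeping checks out: $p^{-n}\cdot p^{2n-2}\cdot p^{n+1}=p^{2n-1}$, and $|ip^n|_p^{-2}/p=p^{2n-1}$ for $p\nmid i$). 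This eliminates the $\epsilon$-approximation step and yields the supremum formula directly; the only price is the (easy) justification that the norm of an orthogonal direct sum of blocks is the supremum of the block norms, which the paper's argument implicitly reproves by hand. One cosmetic remark: when all $c_i$ vanish your equality vector $x_i=\overline{c_i}$ is zero, but then the block is the zero operator and the formula holds trivially, so nothing is lost.
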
 

\begin{proof}
Notice that, because $\phi$ is a Lipschitz function, the ratio $\sum_{i=1}^{p-1}\frac{|\phi(k)-\phi(k+ip^n)|^2}{|k-(k+ip^n)|_p^2}$ is bounded by a uniform constant, hence the sup above exists. Using formula \eqref{comform} we see that:
\begin{equation*}
\begin{aligned}
\left| D \pi(\phi) g_n(k)-\pi(\phi)  D g_n(k)\right|&= \left|p^{n-1}\sum_{i=0}^{p-1}[\phi(k)-\phi(k+ip^n)]g_{n+1}( k+ip^n)\right|\\
&\leq\frac1p \sum_{i=0}^{p-1}\frac{|\phi(k)-\phi(k+ip^n)|}{p^{-n}}|g_{n+1}( k+ip^n)|.
\end{aligned}
\end{equation*}
Thus, 
\begin{equation*}
\| D \pi(\phi) g- \pi(\phi)  D g\|^2 \leq \sum_{n=0}^{\infty}\sum_{0\leq k < p^n}p^{-n}\frac1{p^2}\left(\sum_{i=0}^{p-1}\frac{|\phi(k)-\phi(k+ip^n)|}{p^{-n}}|g_{n+1}( k+ip^n)|\right)^2.
\end{equation*}
Using Cauchy-Schwartz inequality we can estimate the norm of the commutator as follows.
\begin{equation*}
\begin{aligned}
&\| D \pi(\phi) g- \pi(\phi)  D g\|^2 \leq \sum_{n=0}^{\infty}\sum_{0\leq k < p^n}\frac{p^{-n}}{p^2}\left(\sum_{i=0}^{p-1}\frac{|\phi(k)-\phi(k+ip^n)|^2}{p^{-2n}}\right)\left(\sum_{j=0}^{p-1}|g_{n+1}( k+jp^n)|^2\right)\\
 &\leq \sup_{n\geq 0}\sup_{0\leq k < p^n}\frac1p\sum_{i=0}^{p-1}\frac{|\phi(k)-\phi(k+ip^n)|^2}{p^{-2n}}\left(\sum_{n=0}^{\infty}\sum_{0\leq k < p^n}\sum_{j=0}^{p-1}p^{-n-1}|g_{n+1}( k+jp^n)|^2\right)\\
 &=\sup_{n \geq 0}\sup_{0\leq k < p^n}\frac1p\sum_{i=1}^{p-1}\frac{|\phi(k)-\phi(k+ip^n)|^2}{|k-(k+ip^n)|_p^2} \|g\|^2.
\end{aligned}
\end{equation*}
So we have shown that,
\begin{equation*}
\| D \pi(\phi) - \pi(\phi)  D \|^2 \leq \sup_{\substack {n \\ 0\leq k < p^n}}\frac1p\sum_{i=1}^{p-1}\frac{|\phi(k)-\phi(k+ip^n)|^2}{|k-(k+ip^n)|_p^2}.
\end{equation*}

To prove the equality of both sides we will make a particular choice of $g$. For any $\epsilon >0$ there are numbers $n_{\epsilon}, \,k_{\epsilon}$ such that 
\begin{equation*}
0 \leq \sup_{n \geq 0}\sup_{0\leq k < p^n}\left(\frac1p\sum_{i=1}^{p-1}\frac{|\phi(k)-\phi(k+ip^n)|^2}{|k-(k+ip^n)|_p^2}\right)-\frac1p\sum_{i=1}^{p-1}\frac{|\phi(k_{\epsilon})-\phi(k_{\epsilon}+i p^{n_{\epsilon}})|^2}{|k_{\epsilon}-(k_{\epsilon}+ip^{n_{\epsilon}})|_p^2} < \epsilon
\end{equation*}
Given $n_{\epsilon}, \,k_{\epsilon}$ we define the function $g^{\epsilon}$ by:
\begin{equation*}
g^{\epsilon}_{n+1}(k+ip^{n})=\begin{cases}
\frac{\overline{\phi(k_{\epsilon})}-\overline{\phi(k_{\epsilon}+ip^{n_{\epsilon}})}}{p^{-n_{\epsilon}}} & \textrm {; if } (n,k)=(n_{\epsilon},k_{\epsilon})\\
0 & \textrm {; otherwise}
\end{cases}
\end{equation*}
where $i=0,1,\ldots,p-1$.
Then we have
\begin{equation*}
\begin{aligned}
\| D \pi(\phi) g^{\epsilon}- \pi(\phi) D g^{\epsilon}\|^2&=\sum_{n=0}^{\infty}\sum_{0\leq k < p^n}p^{-n}\frac1{p^2}\left|\sum_{i=1}^{p-1}\frac{\big(\phi(k)-\phi(k+ip^n)}{|k-(k+ip^n)\big)|_p}g^{\epsilon}_{n+1}(k+ip^n)\right|^2\\
&=\frac{p^{-n_{\epsilon}}}{p^2}\left|\sum_{i=1}^{p-1}\frac{\big(\phi(k_{\epsilon})-\phi(k_{\epsilon}+i p^{n_{\epsilon}})\big)}{p^{-n_{\epsilon}}}\cdot \frac{\big(\overline{\phi(k_{\epsilon})}-\overline{\phi(k_{\epsilon}+ip^{n_{\epsilon}})}\big)}{p^{-n_{\epsilon}}}\right|^2\\
&=\frac{p^{-n_{\epsilon}}}{p^2}\left(\sum_{i=1}^{p-1}\frac{|\phi(k_{\epsilon})-\phi(k_{\epsilon}+i p^{n_{\epsilon}})|^2}{p^{-2n_{\epsilon}}}\right)^2\\
\end{aligned}
\end{equation*}

The above expression can be rewritten as
\begin{equation*}
\begin{aligned}
\| D \pi(\phi) g^{\epsilon}- \pi(\phi) D g^{\epsilon}\|^2&=\frac{p^{-n_{\epsilon}}}{p^2}\left(\sum_{i=1}^{p-1}\frac{|\phi(k_{\epsilon})-\phi(k_{\epsilon}+i p^{n_{\epsilon}})|}{p^{-n_{\epsilon}}}\cdot |g^{\epsilon}_{n_{\epsilon}+1}(k_{\epsilon}+ip^{n_{\epsilon}})|\right)^2\\
&=\frac1p\left(\sum_{i=1}^{p-1}\frac{|\phi(k_{\epsilon})-\phi(k_{\epsilon}+i p^{n_{\epsilon}})|^2}{p^{-2n_{\epsilon}}}\right)\|g^{\epsilon}\|^2.\\
\end{aligned}
\end{equation*}
This computation shows that for every $\epsilon$,
\begin{equation*}
\begin{aligned}
\|[D, \pi(\phi)]\|^2 &\geq  \frac1p\left(\sum_{i=1}^{p-1}\frac{|\phi(k_{\epsilon})-\phi(k_{\epsilon}+i p^{n_{\epsilon}})|^2}{p^{-2n_{\epsilon}}}\right)\\
 &\geq\sup_{n \geq 0}\sup_{0\leq k < p^n}\frac1p \sum_{i=1}^{p-1} \left| \frac{\phi(k)-\phi(k+i p^{n})}{p^{-n}}\right|^2- \frac{\epsilon}{p}.
\end{aligned}
\end{equation*}
Since this is true for every $\epsilon$, we let $\epsilon \rightarrow 0$ to obtain
\begin{equation*}
\|[D, \pi(\phi)]\|^2 \geq \sup_{n \geq 0}\sup_{0\leq k < p^n}\frac1p \sum_{i=1}^{p-1} \left| \frac{\phi(k)-\phi(k+i p^{n})}{p^{-n}}\right|^2.
\end{equation*}
Since we already established the inequality the other way around, we have proved the lemma.
\end{proof}

Next we are going to use Lemma \ref{LDlemma} to find estimates for $L_{\mathcal D}$ in terms of $L$, which in essence says that the two seminorms are equivalent.
\smallskip

\begin{lem}\label{LLDlemma}
With $L$ and $L_{\mathcal D}$ defined as above, for every $\phi\in\mathcal A$ we have:
\begin{equation*}
\sqrt{\frac {p-1}{p}}\,L(\phi)\geq L_{\mathcal D}(\phi) \geq \frac{(p-1)}{2p\sqrt p}\,L(\phi).
\end{equation*}
\end{lem}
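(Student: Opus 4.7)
The plan is to prove the two inequalities separately, using the explicit formula for $L_{\mathcal{D}}$ established in Lemma \ref{LDlemma}. For the upper bound $L_{\mathcal{D}}(\phi) \leq \sqrt{(p-1)/p}\,L(\phi)$, I would simply feed the Lipschitz condition into Lemma \ref{LDlemma}. Since $|k-(k+ip^n)|_p = p^{-n}$ for $1\leq i\leq p-1$, each ratio $\frac{|\phi(k)-\phi(k+ip^n)|^2}{|k-(k+ip^n)|_p^2}$ is bounded by $L(\phi)^2$; summing $p-1$ such terms, dividing by $p$, taking the supremum, and extracting the square root gives the claim immediately.

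For the lower bound, I would first isolate from Lemma \ref{LDlemma} the pointwise estimate
\begin{equation*}
\frac{|\phi(k)-\phi(k+ip^n)|}{p^{-n}} \leq \sqrt{p}\,L_{\mathcal{D}}(\phi)
\end{equation*}
valid for all $n\geq 0$, $0\leq k<p^n$, and $i\in\{1,\ldots,p-1\}$ (obtained by dropping all but one of the $p-1$ nonnegative terms inside the sup). Next, given two distinct nonnegative integers $k_1 \neq k_2$ with $|k_1-k_2|_p = p^{-m}$, let $k_0\in\{0,\ldots,p^m-1\}$ be the common representative of their level-$m$ ball, and let $k_1' = k_0 + a\,p^m$, $k_2' = k_0 + b\,p^m$ with $a,b\in\{0,\ldots,p-1\}$, $a\neq b$, be the representatives of the level-$(m+1)$ sub-balls. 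Going through $k_0$ with the triangle inequality and the pointwise estimate gives $|\phi(k_1')-\phi(k_2')|\leq 2\sqrt{p}\,p^{-m}\,L_{\mathcal{D}}(\phi)$.

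The core of the argument is then a telescoping along the base-$p$ expansion of $k_1$ to compare $k_1$ to its coarse-grained representative $k_1'$. Writing $k_1 = \sum_{r\geq 0} a_r p^r$ and setting $k_1^{(j)} := k_1 \bmod p^j$, one has $k_1^{(m+1)}=k_1'$, $k_1^{(J)}=k_1$ for large $J$, and $k_1^{(j+1)} = k_1^{(j)} + a_j p^j$ with $0\leq k_1^{(j)}<p^j$. The pointwise estimate applied at each step (trivial when $a_j=0$) yields $|\phi(k_1^{(j+1)})-\phi(k_1^{(j)})|\leq \sqrt{p}\,p^{-j}\,L_{\mathcal{D}}(\phi)$, and summing the geometric series from $j=m+1$ gives
\begin{equation*}
|\phi(k_1)-\phi(k_1')|\leq \frac{\sqrt{p}\,p^{-m}}{p-1}\,L_{\mathcal{D}}(\phi),
\end{equation*}
and similarly for $k_2$. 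Combining all three pieces via the triangle inequality produces $|\phi(k_1)-\phi(k_2)|\leq \frac{2p\sqrt{p}}{p-1}\,p^{-m}\,L_{\mathcal{D}}(\phi)$, i.e., the desired ratio bound on integer pairs.

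Finally, since nonnegative integers are dense in $\mathbb{Z}_p$, continuity of $\phi$ (and continuity of $\rho_p$) lets me promote the bound from integer pairs to all $x,y\in\mathbb{Z}_p$, yielding $L(\phi)\leq \frac{2p\sqrt{p}}{p-1}L_{\mathcal{D}}(\phi)$, which rearranges to the lower bound. The main obstacle I anticipate is the telescoping step: one has to index the base-$p$ digits carefully so that each intermediate comparison genuinely fits into the template $(k,k+ip^n)$ with $0\leq k<p^n$, and the geometric factor $1/(p-1)$ coming from $\sum_{j\geq m+1} p^{-j}$ has to combine cleanly with the $2\sqrt{p}\,p^{-m}$ term to produce exactly the constant $\frac{2p\sqrt{p}}{p-1}$.
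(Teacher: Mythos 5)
Your proposal is correct and follows essentially the same route as the paper's proof: the upper bound by plugging the Lipschitz condition into Lemma \ref{LDlemma}, and the lower bound by extracting the single-term estimate $|\phi(k)-\phi(k+ip^n)|\leq \sqrt{p}\,p^{-n}L_{\mathcal D}(\phi)$, telescoping along the base-$p$ digits, and passing through the ball's integer representative with the triangle inequality, arriving at the same constant $\frac{2p\sqrt p}{p-1}$. The only cosmetic difference is that you telescope on integer pairs and then invoke density of $\N\cup\{0\}$ in $\mathbb Z_p$, whereas the paper telescopes the partial sums of the $p$-adic expansion of a general $x\in\mathbb Z_p$ directly and lets $N\to\infty$ using continuity of $\phi$.
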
 
\begin{proof}
Let $\phi\in\mathcal A$ be a Lipschitz function. The inequality $L_{\mathcal D}(\phi) \leq\sqrt{\frac {p-1}{p}}L(\phi)$ can simply be obtained by  estimating the formula for $L_{\mathcal D}(\phi)$ from Lemma \ref{LDlemma}.  Using the Lipschitz property of $\phi$ we obtain,
\begin{equation*}
\frac1p\sum_{i=1}^{p-1}\frac{|\phi(k)-\phi(k+i p^n)|^2}{|k-(k+ip^n)|_p^2}\leq \frac1p\sum_{i=1}^{p-1}L(\phi)^2=\frac {p-1}{p}L(\phi)^2.
\end{equation*}
By taking the supremum of both sides we see that $L_{\mathcal D}(\phi) \leq\sqrt{\frac {p-1}{p}}L(\phi)$.
\smallskip

To prove the other side of the inequality we notice that for any $\phi\in\mathcal A$, all $n$ and $0\leq k<p^n$ we have:
\begin{equation*}
\frac1p \sum_{i=1}^{p-1}\left|\frac{\phi(k)-\phi(k+i p^{n})}{p^{-n}}\right|^2\leq L_{\mathcal D}(\phi)^2.
\end{equation*}
Since the above sum consists of positive terms each term can be estimated as 
\begin{equation}\label{ldstep1}
\frac{\left|\phi(k)-\phi(k+i p^{n})\right|}{p^{-n}} \leq \sqrt p\, L_{\mathcal D}(\phi).
\end{equation}

For any $\phi\in\mathcal A$, and $x\ne y$ we want to estimate the quotient $\frac{|\phi (x) - \phi (y)|}{|x-y|_p}$ in terms of $L_{\mathcal D}(\phi)$.
If $|x-y|_p=p^{-n}$ for some $n\geq 0$, then $x, y$ are in one of the $(n,k)$ balls of formula \eqref{vlabel} which would  mean that there is a nonnegative integer $k<p^n$ such that $x, y$ have $p$-adic representations of the form:
\begin{equation*}
\begin{aligned}
x&=k+a_0p^n+a_1p^{n+1}+\ldots+a_ip^{n+i}+\ldots\\
y&=k+b_0p^n+b_1p^{n+1}+\ldots+b_jp^{n+j}+\ldots ,
\end{aligned}
\end{equation*}   
where $a_0\ne b_0$.
Using telescoping technique we first estimate the partial difference:
\begin{equation*}
\begin{aligned}
&|\phi(k+a_0p^n+a_1p^{n+1}+\ldots+a_Np^{n+N})-\phi(k)|\\
& \leq |\phi(k)-\phi(k+a_0p^n)|\,+\,|\phi(k+a_0p^n)-\phi(k+a_0p^n+a_1p^{n+1})|\,+ \ldots + \\
&|\phi(k+a_0p^n+ \ldots + a_{N-1}p^{n+N-1})-\phi(k+a_0p^n+\ldots + a_{N}p^{n+N})|\\
&\leq  \sqrt p\, L_{\mathcal D}(\phi)(p^{-n}+p^{-(n+1)}+p^{-(n+2)}+\ldots+p^{-(n+N)} )\\
&= \sqrt p\, L_{\mathcal D}(\phi)\,\frac{p^{-n}(1-p^{-N-1})}{(1-p^{-1})}.
\end{aligned}
\end{equation*} 
Taking the limit as $N\to\infty$, and using the continuity of $\phi$ we obtain:
\begin{equation*}
|\phi(x)-\phi(k)|\leq \sqrt p\, L_{\mathcal D}(\phi)\,\frac{p^{-n}}{(1-p^{-1})}.
\end{equation*}

Similarly we can show that 
\begin{equation*}
|\phi(k)-\phi(y)|\leq \sqrt p\, L_{\mathcal D}(\phi)\,\frac{p^{-n}}{(1-p^{-1})}.
\end{equation*}
Therefore, using the triangle inequality
\begin{equation*}
|\phi(x)-\phi(y)|\leq  \sqrt p\, L_{\mathcal D}(\phi)\,\frac{2p^{-n}}{(1-p^{-1})}=\frac{2\sqrt p\, L_{\mathcal D}(\phi)}{(1-p^{-1})}\rho_p(x,y).
\end{equation*}
Since the above inequality is true for any $x$ and $y$, the Lipschitz constant $L(\phi)$ must satisfy $L(\phi) \leq \frac{2\sqrt p\, L_{\mathcal D}(\phi)}{1-p^{-1}}$.
So the other side of the inequality,  $L_{\mathcal D}(\phi) \geq \frac{(p-1)}{2p\sqrt p}\,L(\phi)$ is also verified.

\end{proof}
\newpage

\begin{cor}\label{commcor}$\ $\

\begin{enumerate}
\item The algebra $C^1(A):= \{\phi \in A;\ \|[\mathcal D,\pi(\phi)]\| <\infty\}$ is the algebra of Lipschitz functions $\mathcal A$.
\item The commutant $A_{\mathcal D}' = \{\phi \in A;\ [\mathcal D,\pi(\phi)] = 0\}$ is trivial. i.e., $A_{\mathcal D}' = \C I$.
\end{enumerate}
\end{cor}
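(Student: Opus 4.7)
The plan is to extract both conclusions directly from the proofs of Lemmas \ref{LDlemma} and \ref{LLDlemma}, by observing that those arguments do not really require $\phi$ to be Lipschitz a priori. The commutator formula \eqref{comform} is a pointwise identity valid for every continuous $\phi$. If the commutator $[\mathcal D,\Pi(\phi)]$ happens to be bounded, then testing it on the localized vectors $g^\epsilon$ built in the second half of the proof of Lemma \ref{LDlemma} yields, for each pair $(n,k)$ with $0\leq k<p^n$, the estimate
\begin{equation*}
\frac{1}{p}\sum_{i=1}^{p-1}\frac{|\phi(k)-\phi(k+ip^n)|^2}{|k-(k+ip^n)|_p^2}\;\leq\;\|[\mathcal D,\Pi(\phi)]\|^2.
\end{equation*}
This is the one step that needs careful verification; the vectors $g^\epsilon$ have finite support and are built only from the finitely many values $\phi(k_\epsilon+ip^{n_\epsilon})$, so they remain well defined without any Lipschitz hypothesis on $\phi$.

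For part (1), the inclusion $\mathcal A\subseteq C^1(A)$ is immediate from the upper bound in Lemma \ref{LLDlemma}. For the reverse, I would take $\phi\in C^1(A)$ so that $L_{\mathcal D}(\phi)<\infty$. The inequality above gives, in particular, the pointwise bound \eqref{ldstep1} for all $n$, $k$, and $1\leq i\leq p-1$. The telescoping/geometric-series argument from the second half of the proof of Lemma \ref{LLDlemma} uses only continuity of $\phi$ once \eqref{ldstep1} is in hand, and therefore produces
\begin{equation*}
L(\phi)\;\leq\;\frac{2p\sqrt p}{p-1}\,L_{\mathcal D}(\phi)<\infty,
\end{equation*}
so $\phi\in\mathcal A$ and the two spaces agree.

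Part (2) is then a one-line consequence. If $[\mathcal D,\pi(\phi)]=0$, then $L_{\mathcal D}(\phi)=0$, hence by the inequality just derived $L(\phi)=0$; so $|\phi(x)-\phi(y)|\leq 0\cdot\rho_p(x,y)=0$ for all $x,y\in\mathbb Z_p$, and $\phi$ is constant. The converse inclusion $\C I\subseteq A_{\mathcal D}'$ is trivial. The main obstacle I foresee is purely bookkeeping, namely confirming that the lower-bound half of the proof of Lemma \ref{LDlemma} indeed goes through for arbitrary continuous $\phi$, so that the two lemmas may be read as characterizing $L_{\mathcal D}$ on all of $A$ and not merely on the dense subalgebra $\mathcal A$.
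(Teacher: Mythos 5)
Your proposal is correct and takes essentially the same route as the paper: both parts are deduced from the equivalence of the seminorms $L$ and $L_{\mathcal D}$ established in Lemma \ref{LLDlemma}. In fact you are more careful than the paper's own two-line proof, which cites that lemma for arbitrary $\phi\in A$ although it is stated only for $\phi\in\mathcal A$; your observation that the test vectors $g^{\epsilon}$ are finitely supported and that the telescoping argument leading to $L(\phi)\leq\frac{2p\sqrt p}{p-1}L_{\mathcal D}(\phi)$ uses only continuity of $\phi$ is precisely the justification needed to apply the lower bound on all of $C^1(A)$.
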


\begin{proof}
It follows from Lemma \ref{LLDlemma} that $L_{\mathcal D}(\phi)<\infty$ iff $L(\phi)<\infty$. Consequently, the operator $[\mathcal D,\pi(\phi)]$ is bounded iff $L(\phi)<\infty$, i.e., $\phi\in\mathcal A$. This proves the first part of the corollary.
 
To verify that the commutant  $A_{\mathcal D}'$ is trivial once again using Lemma \ref{LLDlemma} we observe that $L_{\mathcal D}(\phi)=0$ iff $L(\phi)=0$. But the only functions with zero Lipschitz norm are constant functions.
\end{proof}
\smallskip

The next theorem states the main result of this section, namely that the metric induced by the spectral triple is equivalent to the $p$-adic norm on $\mathbb Z_p$. 

\begin{theo}
For any $x,y\in \mathbb Z_p$ we have 
\begin{equation*}
\sqrt{\frac {p}{p-1}}|x-y|_p\leq \textrm{ dist}_{\mathcal D}(x,y)\leq \frac{2p\sqrt p}{(p-1)}|x-y|_p . 
\end{equation*}
\end{theo}

\begin{proof}
This result is now a simple consequence of Lemma \ref{LLDlemma}. In fact, we can rewrite the formula for $\textrm{ dist}_{\mathcal D}$ as
\begin{equation*}
\textrm{ dist}_{\mathcal D}(x,y)=\sup_{\phi: L_{\mathcal D}(\phi)\ne 0} \frac{|\phi(x)-\phi(y)|}{L_{\mathcal D}(\phi)}.
\end{equation*}
As noted in the proof of Corollary \ref{commcor} we have that $L_{\mathcal D}(\phi)=0$ iff $L(\phi)=0$. Hence, writing
\begin{equation*}
\frac{|\phi(x)-\phi(y)|}{L_{\mathcal D}(\phi)}=\frac{|\phi(x)-\phi(y)|}{L(\phi)}\frac{L(\phi)}{L_{\mathcal D}(\phi)}
\end{equation*}
and using the inequalities of Lemma \ref{LLDlemma}, completes the proof.
\end{proof}
\bigskip

Finally, we verify that the spectral triple $(\mathcal A,\mathcal H,\mathcal D)$ satisfies the conditions of a compact spectral metric space given in Definition \ref{CQMS}.

\begin{theo}
The pair $(A, L_{\mathcal D})$ defined above is a compact spectral metric space.
\end{theo}

\begin{proof}
In the  proof of Proposition \ref{ndprop} we already verified  that the representation $\Pi$ is non-degenerate. The triviality of the commutant $A_{\mathcal D}'$ was established in Corollary \ref{commcor}.

To verify that the image of the Lipshitz ball $B_{\mathcal D} = \{\phi \in A :\  L_{\mathcal D}(\phi)\leq 1\}$ is precompact in $A/A'_{\mathcal D}$ we first observe that we have a natural identification of $A/A'_{\mathcal D}=C(\mathbb Z_p)/\mathbb C I$ with $A_{\{0\}}:=\{\phi\in C(\mathbb Z_p)\ :\ \phi(0)=0\}$. Let $\phi_n\in A_{\{0\}}$, $n=1,2,\dots$, be a sequence of functions in $A_{\{0\}}$ such that $L_{\mathcal D}(\phi_n)\leq 1$. To prove that  $\{\phi_n\}$ has a convergent subsequence we use Lemma \ref{LLDlemma} and the Ascoli-Arzela theorem.

It follows from Lemma \ref{LLDlemma} that:
\begin{equation*}
 L(\phi_n)\leq\frac{2p\sqrt p}{(p-1)}\, L_{\mathcal D}(\phi)\leq \frac{2p\sqrt p}{(p-1)}.
\end{equation*}
Consequently, for every $n$ and for every $x, y\in\mathbb Z_p$, we have:
\begin{equation*}
|\phi_n(x)-\phi_n(y)|\leq \frac{2p\sqrt p}{(p-1)}\,\rho_p(x,y),
\end{equation*}
which implies that the family $\{\phi_n\}$ is equicontinuous. It is also uniformly bounded because
\begin{equation*}
|\phi(x)|=|\phi_n(x)-\phi_n(0)|\leq \frac{2p\sqrt p}{(p-1)}\,|x|_p\leq \frac{2p\sqrt p}{(p-1)}.
\end{equation*}
So, the Ascoli-Arzela theorem implies the existence of a convergent subsequence, and consequently the precompactness of the image of the Lipshitz ball $B_{\mathcal D}$ in $A/A'_{\mathcal D}$.
This completes the proof of the theorem.

\end{proof}

\end{document}